\DeclareMathOperator{\D}{d}
\DeclareMathOperator{\R}{Re}
\theoremstyle{plain}
\newtheorem{theorem}{Theorem}[section]
\newtheorem{lemma}[theorem]{Lemma}
\newtheorem{example}[theorem]{Example}
\newenvironment{remark}[1][Remark]{\begin{trivlist}
\item[\hskip \labelsep {\bfseries #1}]}{\end{trivlist}}
\theoremstyle{definition}
\numberwithin{equation}{section}
\def\eor{\hfill$ \square$}
\begin{document}

%+Title
\title[Multiple elliptic integrals and differential equations]{Multiple elliptic integrals\\ and differential equations}
\author{John M. Campbell}
\address[J. M. Campbell]{Department of Mathematics and Statistics, Dalhousie University, Halifax, NS, B3H 4R2, Canada}\email{jmaxwellcampbell@gmail.com}
\author{M. Lawrence Glasser}
\address[M. L. Glasser]{Department of Physics,
Clarkson University, Potsdam NY 13699, USA}
\email{laryg@clarkson.edu}
 \author{Yajun Zhou
}
\address[Y. Zhou]{Program in Applied and Computational Mathematics (PACM), Princeton University, Princeton, NJ 08544, USA} \email{yajunz@math.princeton.edu}\curraddr{\textrm{} \textsc{Academy of Advanced Interdisciplinary Studies (AAIS), Peking University, Beijing 100871, P. R. China}}\email{yajun.zhou.1982@pku.edu.cn}
\date{\today}\thanks{\textit{Keywords}: Ramanujan-type series, multiple elliptic integrals, complete elliptic integrals, Legendre polynomials\\\indent\textit{MSC 2020}: 33C75\\\indent * The research of J.\ M.\ C.\ was supported via a Killam Postdoctoral Fellowship from the Killam Trusts. The research of Y.\ Z.\ was supported in part by the Applied Mathematics Program within the Department of Energy
(DOE) Office of Advanced Scientific Computing Research (ASCR) as part of the Collaboratory on
Mathematics for Mesoscopic Modeling of Materials (CM4)}
\maketitle
%-Title

\begin{abstract}
 We {introduce and prove}
 evaluations {for families of} multiple elliptic integrals by solving special types of ordinary and partial differential 
 equations. As an application, we obtain new expressions of Ramanujan-type series of level 4 and associated singular 
 values for the complete elliptic integral $\text{{\bf K}}$ with integrals involving  $\text{{\bf K}}$. 
\end{abstract}

\section{Introduction}
 \textit{{M}ultiple elliptic integrals} {include expressions of the form} 
\begin{align}
 \int_\alpha^\beta F(u) \mathbf K(G(u))\D u,\label{eq:MultEllInt_defn} 
 \end{align} for algebraic functions   $F(u)$  and $G(u)$,  and 
\begin{align}
 \mathbf K(k)\colonequals\int_{0}^{\pi/2}\frac{\D \theta}{\sqrt{1 - 
 k^2\sin^2\theta}}\label{eq:K_defn} 
\end{align}is the \textit{complete elliptic integral} of the first kind with \textit{modulus} $k$. {M}ultiple elliptic integrals as in 
 {\eqref{eq:MultEllInt_defn},} 
 together with more sophisticated variations whose integrands involve higher powers of $ \mathbf K${,}
 captivated the interests of all the authors of this paper {over the decades}
 \cite{Bailey2008,Campbell2023CCG,Glasser1976,Zhou2013Pnu}{, with} such integrals {arising} from 
 various applications to physics, including the vibrational properties of lattices \cite{Glasser1976} and the interactions of elementary particles \cite{Bailey2008}. 
 In this note, we deduce evaluations 
 for two families of multiple elliptic integrals  of the form  shown in  \eqref{eq:MultEllInt_defn}, via analytic solutions to {ordinary and partial}
 differential equations.

Our main mission in \S\ref{sec:IntODE} is to exploit a third-order ordinary differential equation (ODE) of Picard--Fuchs type in the construction of the following integral formula with a tunable parameter $ a\in[0,1]$:
\begin{align}
 \int_0^1\frac{\mathbf K(2\sqrt{x(1-x)})\D x}{\sqrt{1-2(2 x - 
 1) a + a^{2}}} = \left[\mathbf K\left( \sqrt{\frac{1-\sqrt{1+a^{2}}}{2}} \right)\right]^2.\label{eq:intODE} 
\end{align}
 Here, on the right-hand side of \eqref{eq:intODE}, it does not matter which univalent branch we choose for the square root of negative numbers, since the complete elliptic integral $ \mathbf K(k)$ defined in \eqref{eq:K_defn} is effectively a function of $ k^2$. As we differentiate \eqref{eq:intODE} before specializing to some critical values of $a$,  known  Ramanujan-type series will enable us to produce 
 {new} multiple elliptic integral {evaluations}, such as 
\begin{align}
 \int_0^1\frac{\mathbf K(2\sqrt{x(1-x)})(4x+3\sqrt{2}-2)\D x}{(4\sqrt{2}+9-8\sqrt{2}x)^{3/2}} = 
 \frac{\pi}{4\sqrt{2}}. \label{eq:intODE'}
\end{align} 
 {Moreover, special values for the elliptic lambda function corresponding to Ramanujan-type 
 series for $\frac{1}{\pi}$ may be applied, together 
 with elliptic integral singular values, to obtain 
 new valuations for multiple elliptic integrals, including 
\begin{align}\label{singularmotivating}
 \int_{0}^{1} \frac{\text{{\bf K}}( 2 \sqrt{x(1-x)} ) \D x }{\sqrt{\frac{9}{8} + \frac{1 - 2x}{\sqrt{2}}} } 
 = \frac{\big[\Gamma \big(\frac{1}{4}\big)\big]^4}{16 \sqrt{2} \pi }, 
\end{align}}where $ \Gamma(s)\colonequals \int_0^\infty t^{s-1}e^{-t}\D t$ is Euler's gamma function for $ \R s>0$.

In \S\ref{sec:IntPDE}, we take advantage of Laplace's equation, 
 which is a second-order partial differential equation (PDE), and build an integral formula with two tunable parameters $ b,c\geq0$:
\begin{align}
\begin{split}
 & \int_0^{\pi/2}\mathbf K\left( \sqrt{\frac{4c\tan\theta}{b^{2}+(c + 
 \tan\theta)^2}} \right)\frac{\sin\theta\D \theta}{\sqrt{b^{2}+(c+\tan\theta)^2}} \\
={} & \frac{\pi}{2\sqrt{(b+1)^{2}+c^2}}.
\end{split}\label{eq:intPDE}
\end{align}
 A special case of the integral formula above can be rearranged into
\begin{align} 
 \int_0^1\frac{\mathbf K(2\sqrt{x(1-x)})x(1-x)\D x}{[1-2x(1-x)]^{3/2}}=\frac{\pi}{2\sqrt{2}},\label{eq:intPDE'} 
\end{align}
 which hearkens back to \eqref{eq:intODE'}.

\section{Multiple elliptic integrals via a third-order ODE\label{sec:IntODE}}
 In what follows, we set 
 \textit{Pochhammer symbols} 
 as $(x)_0\colonequals 1 $ and  $ (x)_n\colonequals x(x+1)\cdots(x+n-1)$ for positive integers $ n$. 
 
We begin by verifying \eqref{eq:intODE} at a special point where $a=0$. 

\begin{lemma}\label{lm:a=0}
 We have an integral identity
\begin{align} 
 \int_0^1\mathbf K(2\sqrt{x(1-x)})\D x=\frac{\pi^{2}}{4}.\label{eq:quarter_pi_sqr} 
\end{align}
\end{lemma}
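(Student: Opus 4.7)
The plan is to expand $\mathbf K$ as a Maclaurin series in the square of its modulus and integrate term-by-term. Substituting $k=2\sqrt{x(1-x)}$ into the hypergeometric representation
$$
\mathbf K(k)=\frac{\pi}{2}\sum_{n=0}^{\infty}\left[\frac{(1/2)_n}{n!}\right]^{2}k^{2n}
$$
turns $k^{2n}$ into $4^{n}x^{n}(1-x)^{n}$. Because every summand is nonnegative, Tonelli's theorem lets me swap sum and integral without fuss; the only singularity of the integrand on $(0,1)$ is the logarithmic blow-up of $\mathbf K$ at $x=1/2$, which is integrable and in no way obstructs the interchange.

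The $x$-integrals are Beta values, $\int_{0}^{1}x^{n}(1-x)^{n}\D x=(n!)^{2}/(2n+1)!$. Using the duplication identity $(2n+1)!=(2n+1)\cdot 4^{n}\,n!\,(1/2)_n$, the inner $4^{n}(n!)^{2}$ cancels cleanly and the sum collapses to
$$
\int_{0}^{1}\mathbf K(2\sqrt{x(1-x)})\D x=\frac{\pi}{2}\sum_{n=0}^{\infty}\frac{(1/2)_n}{(2n+1)\,n!}.
$$
Since $(1/2)_n/(3/2)_n=1/(2n+1)$, the series on the right is precisely ${}_{2}F_{1}(\tfrac12,\tfrac12;\tfrac32;1)$.

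Gauss's summation theorem finishes the job, giving ${}_{2}F_{1}(\tfrac12,\tfrac12;\tfrac32;1)=\Gamma(\tfrac32)\Gamma(\tfrac12)/[\Gamma(1)]^{2}=\pi/2$, and multiplying by the prefactor $\pi/2$ yields $\pi^{2}/4$, as required. Equivalently, writing $1/(2n+1)=\int_{0}^{1}t^{2n}\D t$, swapping sum and integral once more, and recognizing the inner series as the binomial expansion of $(1-t^{2})^{-1/2}$ reduces the series to $\int_{0}^{1}(1-t^{2})^{-1/2}\D t=\arcsin 1=\pi/2$, which is an arguably more elementary route that avoids explicit appeal to Gauss. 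There is no genuine obstacle in this argument: the one point that requires some care is the interchange of summation and integration across the $\mathbf K$-singularity at $x=1/2$, and this is trivialised at a stroke by the nonnegativity of every series term.
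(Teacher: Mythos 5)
Your proposal is correct and follows essentially the same route as the paper's proof: expand $\mathbf K$ in its hypergeometric Maclaurin series, integrate termwise to produce the Beta values $(n!)^2/(2n+1)!$, and evaluate the resulting sum via the arcsine series (the paper invokes the Maclaurin series of $\arcsin\sqrt{x}/\sqrt{x}$ at $x=1$, which is exactly your second, ``more elementary'' route). Your added justification of the interchange by nonnegativity and your alternative closing via Gauss's theorem are fine but do not change the substance of the argument.
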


\begin{proof}
 We can reformulate \eqref{eq:K_defn} into the following Maclaurin expansion:\begin{align}
\mathbf K(k)=\frac{\pi}{2}\left[ 1+\sum_{n=1}^\infty\frac{\big(\frac12\big)_n^2}{(1)_n^2}k^{2n}\right],\quad |k^2|<1. 
\end{align}
 {We obtain} $ \frac{\pi}{2}\left[ 1+\sum_{n=1}^\infty\frac{\left(\frac12\right)_n^2}{(1)_n^2}\frac{4^n (n!)^2}{(2 n+1)!}\right]$
 via termwise integration applied to \eqref{eq:quarter_pi_sqr}, 
 which {may be evaluated as} $\frac{\pi^2}{4} $ 
 {according to the Maclaurin series for} 
 $\frac{\arcsin\sqrt{x}}{\sqrt{x}}$. 
\end{proof}
 
 With {Lemma \ref{lm:a=0},} we can now establish \eqref{eq:intODE} in its entirety. 

\begin{proof}[Proof of \eqref{eq:intODE} by ODE]Typing \begin{quote}\texttt{Annihilator[Integrate[EllipticK[4*(1 - x)*x]/Sqrt[1 - 2*(2*x-1)*a + a*a], \{x, 0, 1\}], Der[a]]}\end{quote} in Koutschan's \texttt{HolonomicFunctions} package v1.7.3 \cite{Koutschan2013}, one sees that the left-hand side of \eqref{eq:intODE} is annihilated by a third-order differential operator\begin{align}
\widehat L_a\colonequals a^{2}\big(1+a^{2}\big) \frac{\D^3}{\D a^3}+3 a\big(1+2a^2\big) \frac{\D^2}{\D a^2}+\big(1+7 a^2\big) \frac{\D}{\D a}+a. 
\end{align}The homogeneous Picard--Fuchs equation $ \widehat L_a f(a)=0,a\in(0,1)$ is solved by \begin{align}
\begin{split}
&f(a)\\={}&c_{1}\left[ \mathbf K\left( \sqrt{\frac{1-\sqrt{1+a^{2}}}{2}} \right) \right]^2+c_2\mathbf K\left( \sqrt{\frac{1-\sqrt{1+a^{2}}}{2}} \right)\mathbf K\left( \sqrt{\frac{1+\sqrt{1+a^{2}}}{2}} \right)\\{}&+c_3 \left[ \mathbf K\left( \sqrt{\frac{1+\sqrt{1+a^{2}}}{2}} \right) \right]^2,
\end{split}
\end{align}where $c_1$, $c_2$, and $c_3$ are constants.  Here, we must have $ c_2=c_3=0$, because\begin{align}
 \mathbf K\left( \sqrt{\frac{1-\sqrt{1+a^{2}}}{2}} \right) ={}&\frac{\pi}{2}+O(a^2),\\\mathbf K\left( \sqrt{\frac{1+\sqrt{1+a^{2}}}{2}} \right)={}&-\frac{\pi i}{2} +\log \frac{8}{a}+O(a^2\log a),
\end{align}as $ a\to0^+$, while the left-hand side of \eqref{eq:intODE} remains bounded as $a$ tends to zero. Thus, we have determined the right-hand side of \eqref{eq:intODE} up to a multiplicative constant $c_1$. Lemma \ref{lm:a=0} reveals that $c_1=1$. 
\end{proof}
\begin{remark}For $ a\in[1,\infty)$, one can manipulate the integrand on the left-hand side of \eqref{eq:intODE} into a form related to $ a\in(0,1]$, so the right-hand side of \eqref{eq:intODE} should be modified into\begin{align}
\frac{1}{a}\left[ \mathbf K\left( \sqrt{\frac{1-\sqrt{1+a^{-2}}}{2}} \right)\right]^2
\end{align} when $a\geq1$. In particular, we note that the integral in \eqref{eq:intODE} does not admit a smooth continuation across the critical point $a=1$. 
\eor\end{remark}
 
  Defining  \textit{Legendre polynomials} \cite[\S10]{Rainville1960} through  
\begin{align}
P_{n}(x)\colonequals \frac{1}{2^n}\sum_{k=0}^n\binom{n}{k}^2(x - 
 1)^{n-k}(x+1)^k,\label{eq:Pn(x)_defn}
\end{align}
 we will construct {an alternative proof of \eqref{eq:intODE} via the expansion}
\begin{align}
 \sum_{n=0}^\infty(-1)^n\frac{\big(\frac12\big)^3_n}{(1)_n^3}(4n+1)P_{2n}(2x-1)=\frac{4\mathbf K(2\sqrt{x(1-x)})}{\pi^2} \label{eq:Baranov_expn}
\end{align}
{considered, for example by}
 Baranov \cite{Baranov2006}, 
 {together with} 
 the generating function for Legendre polynomials 
\begin{align}
 \frac{1}{\sqrt{1-2(2x-1)a+a^2}}=\sum_{n=0}^\infty P_n(2x-1)a^n,\quad |a|<1. \label{eq:Pn_gfun}
\end{align}
 Concretely speaking, applying the orthogonality
relations\begin{align}
\int_0^1 P_n(2x-1)P_m(2x-1)\D x=\begin{cases}0 & \text{if }n\neq m, \\
\frac{1}{2n+1} & \text{otherwise} \\
\end{cases}
\label{eq:Pn_ortho}
\end{align} to \eqref{eq:Baranov_expn} and \eqref{eq:Pn_gfun}, we may identify the left-hand side of \eqref{eq:intODE} with \begin{align}
\frac{\pi^{2}}{4}\left[1+\sum_{n=1}^\infty(-1)^n\frac{\big(\frac12\big)^3_n}{(1)_n^3}a^{2n}\right].\label{eq:Pn_sum}
\end{align} Such a sum is equal to the right-hand side of \eqref{eq:intODE}, according to the \textit{Clausen product identity} [cf.\ \eqref{Clausen} below]. 

A remarkable aspect of 
 the elliptic integral identity in \eqref{eq:intODE} is given by an evaluation that we obtain, as below, 
 through the use of the modular relation 
\begin{equation}\label{complexmodular}
 \text{{\bf K}}\left( i \frac{k}{k'} \right) = k' \text{{\bf K}}(k).   
\end{equation}
 With regard to the definition in \eqref{eq:K_defn}, 
  we let $k' := \sqrt{1-k^2}$ 
 denote the \emph{complementary modulus}, 
 and we write $\text{{\bf K}}'(k) := \text{{\bf K}}(k')$. 
 The \emph{elliptic lambda function} $\lambda^{\ast}$ may be defined so that 
 $\lambda^{\ast}(r)$ for $r > 0$ is the unique value such that 
 $ \frac{ \text{{\bf K}}'\left( \lambda^{\ast}(r) \right) }{ 
 \text{{\bf K}}\left( \lambda^{\ast}(r) \right) } = \sqrt{r}$, 
 referring to the classic \emph{Pi and the AGM} text for details \cite[\S3.2]{BorweinBorwein1987}. 
 For rational values of $r > 0$, the expression $\text{{\bf K}}\left( \lambda^{\ast}(r) \right)$ 
 admits an evaluation as an algebraic multiple of a combination of values of Euler's gamma function with 
 rational arguments, 
 again with reference to the \emph{Pi and the AGM} text \cite[\S9.2]{BorweinBorwein1987}. 
 By applying these singular values together with the modular equation in \eqref{complexmodular} 
 and the elliptic integral identity in \eqref{eq:intODE}, 
 we obtain evaluations as in the motivating example highlighted in \eqref{singularmotivating}. 

\begin{example}
 Using the valuation 
 $ \lambda^{\ast}(4) = 3 - 2 \sqrt{2} $ 
 together with the relations in \eqref{eq:intODE} and \eqref{complexmodular} 
 and the singular value for $\text{{\bf K}}\big( \lambda^{\ast}(r) \big)$, 
 we obtain the motivating example in \eqref{singularmotivating}. 
\end{example}

\begin{example}
 Using the valuation 
 $\lambda^{\ast}(3) = \frac{1}{4} \sqrt{2}(\sqrt{3} - 1)$
 but without the modular relation 
 for $\text{{\bf K}}$ with a complex argument, 
 the multiple elliptic integral identity in \eqref{eq:intODE} can be applied to obtain that 
\begin{equation}\label{notcomplex3}
 \int_{0}^{1} \frac{\text{{\bf K}}( 2 \sqrt{x(1-x)} ) \D x 
 }{\sqrt{3 + 4 i (1 - 2 x)}} 
 = \frac{\sqrt{3} \big[\Gamma\big( \frac{1}{3} \big)\big]^{6} }{2^{\frac{17}{3}} \pi^2}. 
\end{equation}
\end{example}

\begin{example}
 From the valuation $\lambda^{\ast}(7) = \frac{1}{8} \sqrt{2} (3 - \sqrt{7})$, 
 by analogy with the derivation of the multiple elliptic integral in \eqref{notcomplex3}, 
 we can obtain that 
 \begin{align}
 \int_{0}^{1} \frac{\text{{\bf K}}( 2 \sqrt{x (1-x)} ) \D x 
 }{ \sqrt{63 + 16 i (1-2x)} } 
 = \frac{\big[\Gamma\big(\frac{1}{7}\big) \Gamma\big(\frac{2}{7}\big)
 \Gamma \big(\frac{4}{7}\big)\big]^2}{128 \sqrt{7} \pi ^2}. 
\end{align}
\end{example}

 Relative to our derivations of the above Examples, 
 we can apply a similar approach 
 to express Ramanujan-type series of level $4$ 
 via the elliptic integral identity in 
 \eqref{eq:intODE}. This can be accomplished by rewriting the right-hand side of 
 \eqref{eq:intODE} according to 
\begin{align}\label{Clausen}
 1+\sum_{n=1}^{\infty} \frac{\big( \frac{1}{2} \big)_{n}^{3} }{\left( 1 \right)_{n}^{3} } \big( \!-\!a^2 \big)^{n} 
 = \frac{4}{\pi^2} \left[\mathbf K\left( \sqrt{\frac{1 - \sqrt{1 + a^2}}{2}} \right)\right]^2. 
\end{align}
 The hypergeometric identity \eqref{Clausen} derived from Clausen's hypergeometric product identity 
 and is key to the classical derivations of Ramanujan's series for $\frac{1}{\pi}$ \cite{Ramanujan1914}. 
 This and the term-by-term derivates of \eqref{Clausen}
 allow us to apply \eqref{eq:intODE} to rewrite Ramanujan's series of level 4 
 as multiple elliptic integrals. 

\begin{example}
 Guillera \cite{Guillera2006} and Baruah--Berndt \cite{BaruahBerndt2010} evaluated \begin{align}
1+\sum_{n=1}^\infty\frac{\big(\frac12\big)^3_n}{(1)_n^3}(6n+1)\left( -\frac{1}{8} \right)^{n}=\frac{2\sqrt{2}}{\pi},
\end{align}
which is a Ramanujan-type  series.  In view of this, we can take a $ \mathbb Q$-linear combination of 
 \eqref{Clausen} and its derivative with respect to $a$, before specializing to $ a=\frac{1}{\sqrt{8}}$ and reaching \eqref{eq:intODE'}. 
\end{example}
 
\begin{example}
 By mimicking our derivation of the motivating result in \eqref{eq:intODE'} 
 with the use of a Ramanujan-type series 
\begin{align} 
 \frac{4\sqrt{2}}{\pi} = \sum_{n=0}^{\infty} \left( -\frac{26 - 15 \sqrt{3}}{16} \right)^{n} 
 \big[(30 - 6 \sqrt{3}) n + 7 - 3 \sqrt{3} \big] \frac{\big( \frac{1}{2} \big)_{n}^{3} }{\left( 1 \right)_{n}^{3}}, 
\end{align}
 introduced by Baruah and Berndt  \cite{BaruahBerndt2010}, 
 we can show that this formula is equivalent to 
\begin{align}
\begin{split}
 & -\frac{\pi }{8 \sqrt{2}} \\={}
 & \int_{0}^{1} \frac{ \text{{\bf K}}( 2 \sqrt{(1-x)x} ) \big[ 24-18 \sqrt{3}+\sqrt{2} \big(6 \sqrt{3}-11\big) (2 
 x-1) \big] \D x }{\big[42-15 \sqrt{3}-4 \sqrt{2} \big(3 \sqrt{3}-5\big) (2 x-1)\big]^{3/2}}. 
\end{split}\end{align}
\end{example}

 \section{Multiple elliptic integrals via a second-order PDE\label{sec:IntPDE}} 
  The Laplace operator\begin{align} 
 \nabla^2\colonequals \frac{\partial^2}{\partial x^2}+\frac{\partial^2}{\partial y^2}+\frac{\partial^2}{\partial z^2}
\end{align}becomes \begin{align}
\nabla^2=\frac{\partial^{2}}{\partial\rho^{2}}+\frac{1}{\rho}\frac{\partial}{\partial\rho}+\frac{\partial^2}{\partial z^2}
\end{align} when we investigate an axially symmetric problem using cylindrical coordinates $ \rho=\sqrt{x^2+y^2}$ and $z$. One can easily verify that \begin{align}
\nabla^2\frac{1}{\sqrt{\rho^2+(z-z_0)^2}}=0
\end{align}holds for any constant $z_0 $, so long as $ \rho^2+(z-z_0)^2\neq0$.\begin{proof}[Proof of \eqref{eq:intPDE} by PDE]Using \texttt{Mathematica} v14.0, one can check that the integrand of \eqref{eq:intPDE}, namely \begin{align}
\mathbf K\left( \sqrt{\frac{4c\tan\theta}{b^{2}+(c+\tan\theta)^2}} \right)\frac{\sin\theta}{\sqrt{b^{2}+(c+\tan\theta)^2}}
\end{align} is annihilated by a partial differential operator\begin{align}
\frac{\partial^{2}}{\partial b^{2}}+\frac{\partial^{2}}{\partial c^{2}}+\frac{1}{c}\frac{\partial}{\partial c}.
\end{align}Therefore, the difference between the two sides of \eqref{eq:intPDE} satisfies the Laplace equation in cylindrical coordinates $c$ and $b$. Such a difference vanishes in three separate scenarios:\begin{itemize}
\item 
When $c=0$, the integral in \eqref{eq:intPDE} is elementary; \item When $ b=0$, one can compute\begin{align}
\begin{split}&
\int_0^{\pi/2}\mathbf K\left( \sqrt{\frac{4c\tan\theta}{(c+\tan\theta)^2}} \right)\frac{\sin\theta\D \theta}{c+\tan\theta}\\\xlongequal{\theta=\arctan(c x)}{}&\int_0^\infty\mathbf K\left( \frac{2\sqrt{x}}{1+x} \right)\frac{c x\D x}{(1+x)(1+c^{2}x^{2})^{3/2}}\\\xlongequal{\text{\cite[item 164.02]{ByrdFriedman}}}{}&\int_0^\infty \frac{\R[\mathbf K(x)]c x\D x}{(1+c^{2}x^{2})^{3/2}}\\\xlongequal{\text{\cite[(21)]{Glasser1976}}}{}&\frac{\pi}{2\sqrt{1+c^2}}; 
\end{split}
\end{align}\item When $ b+c=|b|+|c|\to\infty$, the dominated convergence theorem is applicable to the integral in \eqref{eq:intPDE}, so that both sides of \eqref{eq:intPDE} agree up to $ o(1/\sqrt{b^2+c^2})$. 
\end{itemize}We may thus conclude our proof of \eqref{eq:intPDE} by appealing to the uniqueness principle for solutions to the Laplace equation.
\end{proof}
%\subsection{A multiple elliptic integral involving $ \mathbf K(2\sqrt{x(1-x)})$}

As we set $ \theta=\arctan\frac{x}{1-x}$, $ b=0$, and $c=1$ in \eqref{eq:intPDE}, we get \eqref{eq:intPDE'}.
%+Bibliography

%-Bibliography

\end{document}